\newcommand{\Ker}{\operatorname{Ker}}
\newcommand{\supp}{\operatorname{supp}}
   \theoremstyle{plain}
   \newtheorem{thm}{Theorem}[section]
   \newtheorem{prop}[thm]{Proposition}
   \newtheorem{lem}[thm]{Lemma}
   \theoremstyle{definition}
   \newtheorem{example}[thm]{Example}
   \theoremstyle{remark}
   \newtheorem{remark}[thm]{Remark}
\author{V. Manuilov, Jingming Zhu}
\date{}
\address{Moscow State University, Leninskie Gory 1, Moscow, 119991, Russia}
\email{manuilov@mech.math.msu.su}
\address{Jiaxing University, 56 South Yuexiu Road, Jiaxing, Zhejiang, 314001, China}
\email{jingmingzhu@mail.zjxu.edu.cn}
\title[Uniform Roe algebras without bounded geometry]{Two versions of the uniform Roe algebras for spaces without bounded geometry}
\begin{document}

\maketitle

\begin{abstract}
We consider two versions of the uniform Roe algebra for uniformly discrete spaces without bounded geometry and discuss some of their properties.
\end{abstract}

\section*{Introduction}

Various versions of Roe algebras are useful in study of both discrete metric spaces and $C^*$-algebras. Although the uniform Roe algebra is well defined for any discrete metric space $X$, most known results are obtained in the case of metric spaces of bounded geometry, when for any $R>0$ there is a uniform bound for the number of points in all balls of radius $R$. Without the condition of bounded geometry, the uniform Roe algebra seems too great, so our idea is to make it smaller in two different ways. The first way is to consider the direct limit of the uniform Roe algebras of all subspaces of bounded geometry. In this way we get the point-related uniform Roe algebra $C^*_{u,p}(X)$, for which some properties of the uniform Roe algebras of bounded geometry spaces can be extended, e.g. the property A of Guoliang Yu implies nuclearity of $C^*_{u,p}(X)$. But this algebra seems to be too small, so we consider also another version, obtained by varying the metric on $X$. We consider only metrics of bounded geometry that dominate the given metric, and show that we can pass to the direct limit with respect to these metrics, obtaining the metric-related uniform Roe algebra $C^*_{u,m}(X)$. For the latter, we show that its maximal version is isomorphic to its regular version if $X$ satisfies the Higson--Roe condition and is coarsely of bounded geometry, which generalizes a similar result of Spakula and Willett for spaces of bounded geometry \cite{Spakula-Willett}.

\section{Two versions of the uniform Roe algebras for spaces without bounded geometry}

Let $X$ be a discrete metric space with a metric $d_0$. It is called {\it uniformly discrete} if $\inf\{d_0(x,y):x,y\in X,x\neq y\}>0$, and it {\it has bounded geometry} if for every $R>0$ the balls $B_{d_0}(x,R)$ of radius $R$ centered at $x\in X$ contain a uniformly bounded number of points, i.e. $\sup_{x\in X}|B_{d_0}(x,R)|<\infty$, where $|A|$ denotes the number of points in the set $A$.

For convenience only, we allow metrics to take the value $\infty$.

Let $l^2(X)$ denote the Hilbert space of square-summable functions on $X$, with the standard orthonormal basis $(\delta_x)_{x\in X}$. An operator $T$ on $l^2(X)$ is of {\it propagation} less than $S$ if $d_0(x,y)>S$ implies that the matrix entries $T_{xy}=\langle\delta_x,T\delta_y\rangle$ equal 0. Let $C^S(X)=C^S(X,d_0)$ denote the set of all operators of propagation less than $S$, and let $\mathbb C_u[X]=\mathbb C_u[X,d_0]=\cup_{S\in\mathbb R}C^S(X)$ be the $*$-algebra of all operators of finite propagation. The standard definition of the uniform Roe algebra $C^*_u(X)$ of $(X,d_0)$ is the norm closure of the finite propagation operators, $C^*_u(X)=C^*_u(X,d_0)=\overline{\mathbb C_u[X,d_0]}$.

If $d_0$ is a metric of bounded geometry on $X$ and if $T$ is a finite propagation operator on $X$ then it is easy to see that
$\sup_{x\in X}|\{y\in X:T_{xy}\neq 0\}|<\infty$, which makes certain arguments work. Without bounded geometry condition the uniform Roe algebra may be too great. For example, if $X$ is countable with the metric $d_0$ defined by $d_0(x,y)=1$ whenever $x\neq y$ then $C^*_u(X,d_0)=\mathbb B(l^2(X))$, i.e. all bounded operators on the Hilbert space $l^2(X)$.

There are at least two possibilities to make this algebra smaller. One way is to remove `extra' {\it points}, so that remaining points would be a bounded geometry space. So consider all subspaces $Y\subset X$ such that $d_0|_Y$ is a metric of bounded geometry. Then $l^2(X)=l^2(Y)\oplus l^2(X\setminus Y)$, and there is a canonical inclusion $\mathbb C_u[Y]\subset \mathbb C_u[X]$, which extends to a canonical inclusion $C^*_u(Y)\subset C^*_u(X)$, so we can set $\mathbb C_{u,p}[X]=\operatorname{dirlim}_{Y}\mathbb C_u[Y]$.
Let $C^*_{u,p}(X)$ be the norm closure, in $\mathbb B(l^2(X))$, of $\mathbb C_{u,p}[X]$. Note that $C^*_{u,p}(X)=\operatorname{dir\ lim}_{Y}C^*_u(Y)$. 
We call this algebra {\it the point-related uniform Roe algebra}. When $d_0(x,y)=1$ for any $x\neq y$, we get $C^*_{u,p}(X)=\mathbb K(l^2(X))$ --- the algebra of compact operators. For some purposes, this is a good definition, for example, these $C^*$-algebras are Morita equivalent for coarsely equivalent spaces by Proposition \ref{Morita}, but usually the uniform Roe algebra should be greater.

The second possibility is to change {\it metric}, not the points, so that with respect to the new metric our space is of bounded geometry. For this purpose, new metrics should increase distance between points. We are going to use metrics of bounded geometry on $X$, and to explain how to order them and to pass to the limit.

For a metric space $(X,d_0)$, let $\mathcal D=\mathcal D(d_0)$ consists of all metrics $d$ on $X$ satisfying the following properties:
\begin{itemize}
\item[(D1)]
$\inf\{d(x,y):x,y\in X,x\neq y\}\geq 1$;
\item[(D2)]
there exists $C>0$ such that $d(x,y)\geq \frac{1}{C}d_0(x,y)$ for any $x,y\in X$ (if $d_0(x,y)=\infty$ then $d(x,y)$ should also be infinite);
\item[(D3)]
$d$ is of bounded geometry.
\end{itemize}

Let us show that $\mathcal D$ can be made a directed set.

For $d_1,d_2\in\mathcal D$ we say that $d_1\preceq d_2$ if $d_2(x,y)\leq d_1(x,y)$ for any $x,y\in X$.

\begin{prop}
If the metric $d_0$ is uniformly discrete then the set $\mathcal D=\mathcal D(d_0)$ is directed, i.e. for any $d_1,d_2\in\mathcal D$ there exists $d\in\mathcal D$ such that $d_1\preceq d$, $d_2\preceq d$.

\end{prop}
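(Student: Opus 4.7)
The plan is to construct $d$ as the path metric derived from the pointwise minimum of $d_1$ and $d_2$. Set $d'(x,y) = \min(d_1(x,y), d_2(x,y))$ and define
\[
d(x,y) = \inf\left\{\sum_{i=0}^{n-1} d'(x_i, x_{i+1}) : n \geq 1,\ x_0 = x,\ x_n = y\right\}.
\]
Symmetry and the triangle inequality are automatic from path-concatenation, and taking $n = 1$ gives $d(x,y) \leq d_j(x,y)$ for $j = 1, 2$, i.e.\ $d_1 \preceq d$ and $d_2 \preceq d$. It then remains to verify that $d \in \mathcal D(d_0)$.

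Conditions (D1) and (D2) should be straightforward. For (D1), every edge $d'(x_i, x_{i+1})$ with $x_i \neq x_{i+1}$ is bounded below by $1$ since $d_1$ and $d_2$ both satisfy (D1), so every nontrivial path sum is at least $1$, giving $d(x,y) \geq 1$ when $x \neq y$ (and in particular confirming that $d$ is a genuine metric). For (D2), pick constants $C_j$ witnessing (D2) for $d_j$ and set $C = \max(C_1, C_2)$; then $d'(x_i, x_{i+1}) \geq \frac{1}{C} d_0(x_i, x_{i+1})$ edgewise, and summation along the path combined with the triangle inequality for $d_0$ yields $d(x,y) \geq \frac{1}{C} d_0(x,y)$.

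The main step, and the only one with real content, is the verification of (D3). Fix $R > 0$ and set $N_j = \sup_{x \in X} |B_{d_j}(x,R)| < \infty$, which is finite by bounded geometry of $d_j$. For $y \in B_d(x, R)$ there exists a path $x = x_0, x_1, \ldots, x_n = y$ with $\sum_i d'(x_i, x_{i+1}) < R$, which we may assume has all edges nontrivial (trivial edges may be deleted). Each edge contributes at least $1$, so $n \leq \lfloor R \rfloor$; and each edge satisfies $d'(x_i, x_{i+1}) < R$, so $x_{i+1} \in B_{d_1}(x_i, R) \cup B_{d_2}(x_i, R)$. Thus from any vertex there are at most $N_1 + N_2$ candidates for the next vertex, and counting paths of length at most $\lfloor R \rfloor$ gives the uniform estimate
\[
|B_d(x,R)| \leq \sum_{k=0}^{\lfloor R \rfloor} (N_1 + N_2)^k,
\]
proving (D3). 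The subtlety to watch is that a short $d'$-path could in principle wander outside every single $d_j$-ball of radius $R$ around $x$; the (D1) lower bound is exactly what caps the combinatorial length of such a path and reduces the count to an iteration of the bounded-geometry bounds of $d_1$ and $d_2$.
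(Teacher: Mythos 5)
Your proof is correct and follows essentially the same route as the paper: the geodesic (path) metric built from edge lengths $\min(d_1,d_2)$, with (D1) forcing paths realizing $B_d(x,R)$ to have boundedly many edges and bounded geometry of $d_1,d_2$ giving at most $N_1+N_2$ choices per step. The only nitpick is that for $y\in B_d(x,R)$ the infimum need not be attained, so you should take a path with $\sum_i d'(x_i,x_{i+1})<R+1$ rather than $<R$ (as the paper does), which only changes the constants in your final count.
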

\begin{proof}
By assumption, we may assume that $\inf_{x\neq y}d_0(x,y)\geq 1$.

Let us construct a graph with $X$ as the set of vertices. For any $x,y\in X$, $x\neq y$, connect these two vertices by an edge of length $l_{x,y}=\min(d_1(x,y),d_2(x,y))$, and define a metric $d$ on $X$ as the geodesic metric, i.e. $d(x,y)=\inf \sum_{i=1}^r l_{x_{i-1},x_i}$, where the infimum is taken over all $r\in\mathbb N$ and all sets $x_0=x,x_1,\ldots,x_{r-1},x_r=y$ of points in $X$. Then $d(x,y)\leq l_{x,y}$ for any $x,y\in X$, hence $d_1,d_2\preceq d$. It is also clear that $d$ satisfies (D1).

Let us check (D2). Let $C$ satisfy $d_j(x,y)\geq\frac{1}{C}d_0(x,y)$ for any $x,y\in X$ and for $j=1,2$.
Fix $x,y\in X$ and some $\varepsilon>0$, and let $x_1,\ldots,x_{r-1}\in X$ be points such that
\begin{equation}\label{1}
d(x,y)\geq \sum_{i=1}^r l_{x_{i-1},x_i}-\varepsilon.
\end{equation}
Then
\begin{equation}\label{2}
\sum_{i=1}^r l_{x_{i-1},x_i}= \sum_{i=1}^r d_{j_i}(x_{i-1},x_i)\geq
\frac{1}{C}\sum_{i=1}^r d_0(x_{i-1},x_i)\geq \frac{1}{C}d_0(x,y),
\end{equation}
where $j_i$ is either 1 or 2.
It follows from (\ref{1}) and (\ref{2}) that $d(x,y)\geq\frac{1}{C}d_0(x,y)-\varepsilon$ for any $\varepsilon>0$, hence the metric $d$ satisfies (D2).

Fix $x\in X$ and radius $R$. If $y\in B_d(x,R)$, i.e. if $d(x,y)\leq R$, then, by the definition of the metric $d$, there exists a set $x=x_0,x_1,\ldots,x_{r-1},x_r=y$ of points such that
\begin{itemize}
\item[(i)]
$\sum_{i=1}^r d(x_{i-1},x_i)\leq R+1$;
\item[(ii)]
for each $i=1,\ldots,r$, $d(x_{i-1},x_i)=l_{x_{i-1},x_i}$.
\end{itemize}

We claim that $r\leq R+1$. Indeed, since $l_{x,z}\geq 1$ for any $x,z\in X$,
$$
R+1\geq\sum_{i=1}^r l_{x_{i-1},x_i}\geq r.
$$

Now, instead of estimating the number of points $y\in B_d(x,R)$ we estimate the number of points $x_1,\ldots,x_r$ with the above properties (i) and (ii).
As both $d_1$ and $d_2$ are metrics of bounded geometry, for a given $R$, there exists $M>0$ such that $|B_{d_1}(x,R)|\leq M$ and $|B_{d_2}(x,R)|\leq M$ for any $x\in X$.

We know that $x_1\in B_d(x_0,R)$ and that $d(x_0,x_1)$ equals either $d_1(x_0,x_1)$ or $d_2(x_0,x_1)$, hence the number of all possible $x_1$ satisfying (i) and (ii) is bounded by $2M$. Similarly, $x_2\in B_d(x_1,R)$, and we have not more than $2M$ possibilities for $x_2$ for a given $x_1$, and hence not more than $(2M)^2$ possibilities for $x_2$ totally. Proceeding inductively, there are not more than $(2M)^r\leq (2M)^{R+1}$ possibilities for $x_r=y$. This proves the bounded geometry condition (D3) for $d$.

\end{proof}

For $d_1,d_2\in \mathcal{D}$ with $d_1\preceq d_2$, and for any $S\in(0,\infty)$, there is a canonical inclusion $C^S(X,d_1)\to C^S(X,d_2)$. Indeed, let $T\in C^S(X,d_1)$. If $d_2(x,y)\geq S$ then $d_1(x,y)\geq d_2(x,y)\geq S$, hence $T_{xy}=0$.

So, now we can define the $*$-algebra $\mathbb C_{u,m}[X,d]=\operatorname{dirlim}_{\mathcal D}\mathbb C_u[X,d]$. Its closure in $\mathbb B(l^2(X))$ is called {\it the metric-related uniform Roe algebra} and is denoted by $C^*_{u,m}(X,d_0)=C^*_{u,m}(X)$, where the subscript $m$ indicates that we are changing {\it metrics} on $X$.

\medskip
Given a metric space $(X, d_0)$, let $Y\subset X$ be a subspace of bounded geometry with respect to the metric $d_0|_{Y}$. Define a metric $d$ on $X$ by setting $d|_Y=d_0|_Y$, and $d(x,y)=d(x,z)=\infty$ for any $x,z\notin Y$, $y\in Y$. Then $d\in\mathcal D$, hence there is a canonical inclusion $i_Y:\mathbb C_u[Y,d_0|_Y]\to \mathbb C_u[X,d]$. If $Y'\supset Y$ is another subset of bounded geometry in $X$ then the metric $d'\in\mathcal D$ satisfies $d\preceq d'$, and the diagram
$$
\begin{xymatrix}{
\mathbb C_u[Y,d_0|_Y]\ar[r]^-{i_Y}\ar[d]&\mathbb C_u[X,d]\ar[d]\\
\mathbb C_u[Y',d_0|_{Y'}]\ar[r]^-{i_{Y'}}&\mathbb C_u[X,d']
}\end{xymatrix}
$$
commutes, so we can pass to the limit to obtain the map $i:\mathbb C_{u,p}[X,d_0]\to\mathbb C_{u,m}[X,d_0]$, which is obviously injective.

\section{Another description of $\operatorname{dirlim}_{\mathcal D}\mathbb C_u[X,d]$}

Fix $k\in\mathbb N$, and denote by $\mathbb B^{(k)}(l^2(X))$ the set of all $T\in\mathbb B(l^2(X))$ that satisfy the following property: each line and each column of the matrix of $T$ with respect to the standard basis $\{\delta_x\}_{x\in X}$ of $l^2(X)$ contains no more than $k$ non-zero entries.

\begin{lem}\label{lem-graph}
Let $T\in\mathbb B^{(k)}(l^2(X))\cap C^{S}(X,d_0)$. Then there exists a metric $d\in\mathcal D$ such that $T$ has propagation 1 with respect to $d$.

\end{lem}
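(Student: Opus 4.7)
My plan is to build $d$ as the path metric of a graph on $X$ whose edges record the sparsity pattern of $T$. Concretely, put an (undirected) edge of length $1$ between distinct points $x,y\in X$ whenever $T_{xy}\neq 0$ or $T_{yx}\neq 0$, and take no other edges; let $d(x,y)$ be the resulting geodesic distance, with $d(x,y)=\infty$ when $x$ and $y$ lie in different connected components. The fact that each edge carries length $1$ makes (D1) immediate, and the definition makes the propagation assertion automatic: if $T_{xy}\neq 0$ with $x\neq y$ then $\{x,y\}$ is an edge, so $d(x,y)\leq 1$, and hence $d(x,y)>1$ forces $T_{xy}=0$.

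Next I would verify (D3). Every vertex $x$ has at most $2k$ neighbours in the graph, since the set of $y$ with $T_{xy}\neq 0$ and the set of $y$ with $T_{yx}\neq 0$ both have cardinality at most $k$ by the definition of $\mathbb B^{(k)}(l^2(X))$. Consequently, any $y$ with $d(x,y)\leq R$ is reachable from $x$ by a path using at most $\lfloor R\rfloor$ edges, so
\[
|B_d(x,R)|\leq 1+2k+(2k)^2+\cdots+(2k)^{\lfloor R\rfloor},
\]
a bound independent of $x$.

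For (D2) I would use that $T\in C^S(X,d_0)$ forces every edge $\{x,y\}$ of the graph to satisfy $d_0(x,y)\leq S$. Given a path $x=x_0,x_1,\ldots,x_r=y$ realising (or nearly realising) $d(x,y)=r$, the triangle inequality for $d_0$ gives
\[
d_0(x,y)\leq \sum_{i=1}^r d_0(x_{i-1},x_i)\leq rS = S\,d(x,y),
\]
so $d(x,y)\geq \tfrac{1}{S}d_0(x,y)$, with $C=\max(S,1)$. If $x,y$ are in different components then $d(x,y)=\infty$ and the inequality is trivial; moreover, if $d_0(x,y)=\infty$ then no graph path can connect $x$ to $y$ (each edge has finite $d_0$-length), so $d(x,y)=\infty$ as well, as required by (D2). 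Symmetry, the triangle inequality, and positivity of $d$ are clear from the geodesic construction.

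There is essentially no obstacle; the only point that deserves a moment of care is checking that the bound $d_0(x,y)\leq S$ on each graph edge is enough to transfer the bounded geometry of the degree-bounded graph into both (D2) and (D3) simultaneously, which is why I chose uniform length $1$ on the edges rather than, say, $d_0$-weighted lengths.
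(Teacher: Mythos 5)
Your construction is exactly the one in the paper: the graph with an edge of length $1$ between $x$ and $y$ whenever $T_{xy}\neq 0$ or $T_{yx}\neq 0$, the geodesic metric, the degree bound $2k$ for (D3), and the edge-length bound $d_0(x,y)\leq S$ giving $d\geq\frac{1}{S}d_0$ for (D2). The proof is correct and follows the same route; your explicit handling of the $d_0(x,y)=\infty$ case is a small point the paper leaves implicit.
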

\begin{proof}
Let $\Gamma$ be the graph with $X$ as the set of vertices, and let $x,y\in X$ be connected by an edge of length 1 if $T_{xy}\neq 0$ or $T_{yx}\neq 0$. Let $d$ be the geodesic metric on $\Gamma$. The property (D1) for $d$ is trivially true. Moreover, the metric $d$ is integer-valued, $d(x,y)\in\mathbb N\cup\{\infty\}$ for any $x,y\in X$.

To check (D2), let us first assume that $d(x,y)=1$. This implies that at least one of $T_{xy}$ and $T_{yx}$ is non-zero, hence $d_0(x,y)\leq S$. Then $d(x,y)\geq \frac{1}{S}d_0(x,y)$. If $x$ and $y$ are arbitrary with $d(x,y)=r\in\mathbb N$ then there is a set $x=x_0,x_1,\ldots,x_{r-1},x_r=y$ such that $d(x_{i-1},x_i)=1$ for any $i=1,2,\ldots,r$. Then
$$
d(x,y)=\sum_{i=1}^rd_{x_{i-1},x_i}\geq\frac{1}{S}\sum_{i=1}^rd_0(x_{i-1},x_i)\geq\frac{1}{S}d_0(x,y).
$$

Property (D3) follows from $T\in\mathbb B^{(k)}(l^2(X))$. Indeed, the unit ball $B_d(x,1)$ contains not more than $2k+1$ points for any $x\in X$, and the size of any ball can be estimated by sizes of unit balls. Thus, $d\in\mathcal D$.

Finally, it is clear that the propagation of $T$ with respect to $d$ is 1.

\end{proof}

\begin{prop}
Let $(X,d_0)$ be a uniformly discrete metric space, and let $\mathcal{D}$ be the directed set defined by (D1)-(D3). Then $\operatorname{dirlim}_\mathcal D \mathbb C[X,d]=\cup_{k,S}\mathbb B^{(k)}(l^2(X))\cap C^S(X,d_0)$.

\end{prop}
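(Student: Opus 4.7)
The plan is to prove equality of the two sets by showing both inclusions, with the nontrivial direction handled almost entirely by Lemma \ref{lem-graph}.

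For the inclusion $\operatorname{dirlim}_{\mathcal D}\mathbb C_u[X,d]\subseteq\bigcup_{k,S}\mathbb B^{(k)}(l^2(X))\cap C^S(X,d_0)$, I would fix $T\in\mathbb C_u[X,d]$ for some $d\in\mathcal D$, say of propagation at most $S'$ with respect to $d$. First I would use (D2): choosing $C>0$ with $d\geq\frac{1}{C}d_0$, the implication $T_{xy}\neq 0\Rightarrow d(x,y)\leq S'\Rightarrow d_0(x,y)\leq CS'$ shows $T\in C^{CS'}(X,d_0)$. Next I would use (D3): since $d$ is of bounded geometry, there exists $k\in\mathbb N$ with $|B_d(x,S')|\leq k$ for every $x\in X$. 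Because every nonzero matrix entry $T_{xy}$ forces $y\in B_d(x,S')$ and $x\in B_d(y,S')$, both rows and columns of $T$ carry at most $k$ nonzero entries, so $T\in\mathbb B^{(k)}(l^2(X))$.

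For the reverse inclusion, I would take $T\in\mathbb B^{(k)}(l^2(X))\cap C^S(X,d_0)$ and directly invoke Lemma \ref{lem-graph}, which furnishes a metric $d\in\mathcal D$ with respect to which $T$ has propagation $1$. Hence $T\in\mathbb C_u[X,d]$, and therefore $T$ belongs to the directed union $\operatorname{dirlim}_{\mathcal D}\mathbb C_u[X,d]$.

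There is no real obstacle here: the content of the proposition is entirely encoded in Lemma \ref{lem-graph} together with the definition of $\mathcal D$. The only point deserving a sentence of care is to note that the direct limit is realized concretely as the union of the subspaces $\mathbb C_u[X,d]\subseteq\mathbb B(l^2(X))$, since all the connecting maps in the system are the canonical set-theoretic inclusions $C^S(X,d_1)\hookrightarrow C^S(X,d_2)$ recalled before Lemma \ref{lem-graph}; with this identification made explicit, the two inclusions above give the claimed equality.
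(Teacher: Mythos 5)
Your proof is correct and follows essentially the same route as the paper: the forward inclusion via (D2) and (D3) exactly as written, and the reverse inclusion by direct appeal to Lemma \ref{lem-graph}. The extra sentence identifying the direct limit with the union inside $\mathbb B(l^2(X))$ is a reasonable clarification but does not change the argument.
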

\begin{proof}
If $T\in \operatorname{dirlim}_\mathcal D \mathbb C[X,d]$ then there exists $d\in\mathcal D$ such that $T\in\mathbb C[X,d]$, and the latter means that there exists $S>0$ such that $T\in C^S(X,d)$. By property (D2), there exists $C>0$ such that $d(x,y)\geq\frac{1}{C}d_0(x,y)$ for any $x,y\in X$. Then $T\in C^{CS}(X,d_0)$. Since the metric $d$ is of bounded geometry, there exists $k\in\mathbb N$ such that $|\{y\in X:d(x,y)\leq S\}|\leq k$ for any $x\in X$, hence the number of non-zero entries in each line and in each column of the matrix of $T$ is bounded by $k$, i.e. $T\in\mathbb B^{(k)}(l^2(X))$.

If $T\in \mathbb B^{(k)}(l^2(X))\cap C^S(X,d_0)$ then, by Lemma \ref{lem-graph} there exists a metric $d\in\mathcal D$ such that $T\in C^1(X,d)$.

\end{proof}

\begin{remark}
Note that we could define one more version of the Roe algebra, by changing both the points and the metric. Namely, there is a directed set $\mathcal E$ consisiting of all pairs $(Y,d)$, where $Y\subset X$, $d$ is a metric on $X$ such that it satisfies (D1) and (D2), while (D3) is replaced by
\begin{enumerate}
\item[(D3')]
$Y$ is of bounded geometry with respect to $d$.
\end{enumerate}
Then $(Y,d)\preceq(Y',d')$ if $Y\subset Y'$ and $d\preceq d'$.
It turns out that the direct limit over $\mathcal E$ is the same as that over $\mathcal D$.
\end{remark}

\section{Coarse equivalence vs Morita equivalence}

\begin{prop}\label{Morita}
Let $X=(X,d_X)$, $Y=(Y,d_Y)$ be coarse equivalent uniformly discrete spaces with uniformly discrete metrics $d_X$ and $d_Y$ respectively. Then the algebras $C^*_{u,p}(X)$ and $C^*_{u,p}(Y)$ are Morita equivalent.

\end{prop}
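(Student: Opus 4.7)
The plan is to reduce to the classical Morita equivalence of uniform Roe algebras for coarsely equivalent bounded geometry spaces (Brodzki--Niblo--Wright) and to extend it through the direct limit defining $C^*_{u,p}$. Fix a coarse equivalence $f \colon X \to Y$ with coarse inverse $g \colon Y \to X$. Uniform discreteness of both spaces together with effective properness of $f$ supplies an unbounded nondecreasing function $\rho_- \colon [0,\infty) \to [0,\infty)$ satisfying $\rho_-(d_X(x,x')) \le d_Y(f(x), f(x'))$, so $f$ is in fact a coarse embedding (and symmetrically for $g$).

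The key technical lemma is that $f$ and $g$ send bounded geometry subspaces to bounded geometry subspaces. Indeed, if $Z \subset X$ has bounded geometry, then for any $y \in Y$ and $R>0$, any two distinct points $y_1, y_2 \in B_{d_Y}(y,R) \cap f(Z)$ lift to $z_1 \ne z_2 \in Z$ with $d_X(z_1, z_2) \le \rho_-^{-1}(2R)$, and bounded geometry of $Z$ bounds the number of such preimages. Hence $f|_Z \colon Z \to f(Z)$ is a surjective coarse equivalence between bounded geometry spaces (bornologous and effectively proper by restriction).

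For each such $Z$, the classical theorem supplies an imprimitivity bimodule $E_Z$ implementing Morita equivalence between $C^*_u(Z)$ and $C^*_u(f(Z))$; concretely $E_Z$ is obtained as the closure of operators in $\mathbb B(\ell^2(f(Z)), \ell^2(Z))$ whose matrix is supported in a uniform neighborhood of the graph of $f|_Z$. I would then verify that $Z \mapsto E_Z$ is functorial with respect to inclusions $Z_1 \subset Z_2$, compatibly with the algebra inclusions $C^*_u(Z_1) \subset C^*_u(Z_2)$ and $C^*_u(f(Z_1)) \subset C^*_u(f(Z_2))$ and with both $C^*$-valued inner products. Passing to the direct limit over bounded geometry subspaces of $X$ and closing in $\mathbb B(\ell^2(Y), \ell^2(X))$ yields a $C^*_{u,p}(X)$--$C^*_{u,p}(Y)$ imprimitivity bimodule $E$. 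Cofinality of $\{f(Z) : Z \subset X \text{ of bounded geometry}\}$ among bounded geometry subspaces of $Y$ --- which follows because any such $W \subset Y$ is at bounded Hausdorff distance from $fg(W)$, and $g(W)$ has bounded geometry by symmetry --- ensures that the right-hand algebra is genuinely all of $C^*_{u,p}(Y)$.

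The main obstacle will be the coherence of the assignment $Z \mapsto E_Z$: one needs to unpack the classical construction and make consistent choices along restrictions of a single global coarse equivalence so that the $E_Z$ truly form a directed system whose limit is an imprimitivity bimodule, rather than a collection of unrelated Morita equivalences. A secondary subtlety is the Hausdorff-distance slack in the cofinality step, which is absorbed by enlarging $Z$, using that the union of two bounded geometry subspaces of $Y$ at bounded Hausdorff distance is again of bounded geometry.
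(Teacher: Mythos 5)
Your proposal follows the same skeleton as the paper's proof: reduce to the Brodzki--Niblo--Wright result for bounded geometry spaces, prove the key lemma that a coarse equivalence carries bounded geometry subspaces to bounded geometry subspaces (your counting argument via $\rho_-^{-1}(2R)$ is exactly the paper's argument phrased with the coarse inverse $g$), pass to the direct limit over bounded geometry subspaces of $X$, and use cofinality of the images to identify the right-hand side with $C^*_{u,p}(Y)$. The one genuine difference is the device used to transport the classical result through the limit: the paper glues the \emph{stable isomorphisms} $\alpha_A:C^*_u(A)\otimes\mathbb K\to C^*_u(f(A))\otimes\mathbb K$ of BNW, induced by explicit bijections $\phi_A:A\times\mathbb N\to f(A)\times\mathbb N$, whereas you glue \emph{imprimitivity bimodules} $E_Z$ of finite-propagation kernels near the graph of $f|_Z$. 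Your choice arguably handles the coherence issue you flag more cleanly, since $E_{Z_1}\subset E_{Z_2}$ is automatic from the support condition and requires no choice of enumerations, while the paper's asserted compatibility $\phi_{A'}|_{A\times\mathbb N}=\phi_A$ depends on careful choices (the multiplicities $|f^{-1}(y)\cap A|$ grow with $A$). The one place your argument as written would fail is the cofinality step: if $f$ is not surjective, a bounded geometry $W\subset Y$ with $W\not\subset f(X)$ can never satisfy $W\subset f(Z)$, so no enlargement of $Z$ absorbs the Hausdorff-distance slack. The paper avoids this by first reducing, in one standard sentence, to the case of surjective $f$; you should either do the same, or replace strict cofinality by a direct fullness argument for the right inner product using that $f(X)$ is a net in $Y$. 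With that repair, your route is correct and essentially parallel to the paper's.
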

\begin{proof}
The proof is a slight modification of the proof of Theorem 4 in \cite{BNW}.

The general case standardly reduces to the case, when the coarse equivalence map $f:X\to Y$ is surjective. Recall that a coarse map $h:X\to Y$ should satisfy
\begin{equation}\label{coarse}
\forall R>0 \
\exists S>0 \mbox{\ such that\ }d_X(x,x')\leq R\mbox{\ implies\ }d_Y(f(x),f(x'))\leq S.
\end{equation}
As $X$ and $Y$ are coarse equivalent, there exists a coarse map $g:Y\to X$ and some $C>0$ such that $d_X(g\circ f(x),x)\leq C$ for any $x\in X$.

We claim that
$f(A)$ is of bounded geometry for any $A\subset X$ of bounded geometry.
Indeed, let $A\subset X$ be of bounded geometry, $y=f(x)\in f(A)$.
Surjectivity of $f$ implies that
$$
|B_{d_Y}(f(x),R)|\leq|\{x'\in A:d_Y(f(x),f(x'))\leq R\}|.
$$
If $d_Y(f(x),f(x'))\leq R$ then, by (\ref{coarse}) applied to $g$, we have
$$
d_X(x,x')\leq d_X(x,g\circ f(x))+d_X(g\circ f(x),g\circ f(x'))+d_X(g\circ f(x'),x')\leq S+2C,
$$
hence $|B_{d_Y}(f(x),R)|\leq|B_{d_X}(x,S+2C)\cap A|$, and the latter is bounded uniformly in $x$.

Note that $f|_A$ is a coarse equivalence map implementing the coarse equivalence between $A$ and $f(A)$.
By Theorem 4 in \cite{BNW}, bounded geometry of $A$ implies existence of an isomorphism
$$
\alpha_A:C^*_u(A)\otimes\mathbb K\to C^*_u(f(A))\otimes\mathbb K,
$$
induced by a bijection
$$
\phi_A:A\times\mathbb N\to f(A)\times\mathbb N
$$
given by the formula
$$
\phi_A(x,j)=(f(x),\pi(x)+jN(f(x))),
$$
where $N(y)$, for $y\in f(A)$, denotes the (finite) cardinality of $f^{-1}(y)\cap A$,  and $\pi:f^{-1}(y)\mapsto\{1,2,\ldots,N(f(x))\}$ is a bijection. If $A\subset A'\subset X$ and if $A'$ is of bounded geometry then $\phi_{A'}|_{A\times\mathbb N}=\phi_A$, hence the diagram
$$
\begin{xymatrix}{
C^*_u(A)\otimes\mathbb K\ar[r]^-{\alpha_A}\ar[d]&C^*_u(f(A))\otimes\mathbb K\ar[d]\\
C^*_u(A')\otimes\mathbb K\ar[r]^-{\alpha_{A'}}&C^*_u(f(A'))\otimes\mathbb K
}\end{xymatrix}
$$
commutes.
Hence we can define the direct limit map $\alpha:C^*_{u,p}(X)\otimes\mathbb K\to C^*_{u,p}(Y)\otimes\mathbb K$.

Note that for each $B\subset Y$ of bounded geometry there exists $A\subset X$ of bounded geometry such that $B=f(A)$. Indeed, for each $y\in B$ choose one point $x\in X$ such that $f(x)=y$, and let $A$ be the set of all such points. Then $f|_A$ is a bijection between $A$ and $B=f(A)$. For $x\in A$, by (\ref{coarse}) and bijectivity of $f|_A$, we have $|B_{d_X}(x,R)|\leq |B_{d_Y}(f(x),S)|$ for any $x\in A$, which proves bounded geometry for $A$.

As each $\alpha_A$ is an isomorphism, this shows that the limit map $\alpha$ is an isomorphism.

\end{proof}

\begin{remark}
This result doesn't hold for the metric-related version of the uniform Roe algebra. The space $\mathbb N$ with the metric $d_0$ given by $d_0(x,y)=1$ whenever $x\neq y$ is coarsely equivalent to a single point, but the metric-related uniform Roe algebras are $\mathbb B_f(H)$ from \cite{Manuilov1} and $\mathbb C$ respectively.

\end{remark}

\section{Higson--Roe condition and nuclearity}

Recall that a metric space $X$ satisfies the Higson--Roe (HR) condition \cite{Higson-Roe} if for any $R>0$ and any $\varepsilon>0$ there exists $S>0$ and $\xi:X\to l^2(X)$ such that
\begin{itemize}
\item[(HR1)]
the function $\xi_x$ takes values in $[0,1]$ and $\|\xi_x\|=1$ for each $x\in X$;
\item[(HR2)]
$\|\xi_x-\xi_y\|<\varepsilon$ when $d(x,y)\leq R$;
\item[(HR3)]
$\supp\xi_x\subset B_d(x,S)$ for any $x\in X$.
\end{itemize}

It is known that the property A of Guoliang Yu implies the HR condition, and that these two properties are equivalent if $X$ is of bounded geometry (\cite{Willett}, Theorem 1.2.4).

\begin{prop}
Let $(X,d_0)$ satisfy the HR condition. Then $C^*_{u,p}(X,d_0)$ is nuclear.

\end{prop}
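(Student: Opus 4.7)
The plan is to use the direct limit description $C^*_{u,p}(X) = \operatorname{dirlim}_Y C^*_u(Y)$, where $Y$ runs over subspaces of $X$ of bounded geometry. Since nuclearity is preserved under direct limits, it is enough to show that each $C^*_u(Y)$ is nuclear.

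When $Y$ has bounded geometry, $C^*_u(Y)$ is nuclear as soon as $Y$ has property A, by a classical theorem of Skandalis--Tu--Yu, and property A is equivalent to the HR condition in the bounded geometry setting (\cite{Willett}, Theorem 1.2.4). So the problem reduces to verifying that the HR condition on $X$ passes to every bounded geometry subspace $Y \subseteq X$.

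To establish this, fix $R, \varepsilon > 0$ and apply HR on $X$ with parameters $R$ and a suitably small $\varepsilon'$ to obtain $S > 0$ and $\xi_x \in l^2(X)$. Rather than trying to restrict $\xi_x$ to $l^2(Y)$ directly, which may give the zero vector since nothing forces $\xi_x$ to have mass at points of $Y$, I would pass to the positive type kernel $k(x,y) = \langle \xi_x, \xi_y \rangle$ on $X$. By construction $k(x,x) = 1$, $|1 - k(x,y)| \leq (\varepsilon')^2/2$ when $d_0(x,y) \leq R$, and $k(x,y) = 0$ whenever $d_0(x,y) > 2S$, since then $\supp \xi_x \cap \supp \xi_y = \emptyset$. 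Restricting $k$ to $Y \times Y$ preserves all three properties, and by the standard positive type kernel characterization of property A this furnishes property A on $Y$ with parameters $R, \varepsilon, 2S$.

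The main obstacle is exactly this hereditariness step: the HR functions for $X$ cannot simply be restricted to $Y$, so the positive type kernel detour (or an analogous modification of $\xi_x$) is essential. Once property A of $Y$ is in hand, nuclearity of $C^*_u(Y)$ is a citation and passage to the direct limit is immediate.
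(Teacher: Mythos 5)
Your argument is correct and takes essentially the same route as the paper: decompose $C^*_{u,p}(X)=\operatorname{dirlim}_Y C^*_u(Y)$ over bounded geometry subspaces, deduce nuclearity of each $C^*_u(Y)$ from property A (equivalently, the HR condition) of $Y$, and pass to the direct limit. The only difference is that you prove the hereditariness of the HR condition by hand via the positive type kernel $k(x,y)=\langle\xi_x,\xi_y\rangle$, whereas the paper simply cites Proposition 3 of Dranishnikov--Januszkiewicz for that step and Brown--Ozawa Theorem 5.5.7 for property A implying nuclearity of the uniform Roe algebra.
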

\begin{proof}
If $Y\subset X$ is of bounded geometry then, by Proposition 3 of \cite{Dranishnikov}, $Y$ satisfies the HR condition, hence, by Theorem 5.5.7 of \cite{Brown-Ozawa}, $C^*_u(Y)$ is nuclear. Direct limit of nuclear $C^*$-algebras is nuclear as well \cite{Takesaki}.

\end{proof}

\begin{prop}\label{N}
Suppose $(X,d_0)$ is not of bounded geometry. Then $C^*_{u,m}(X,d_0)$ is not nuclear.

\end{prop}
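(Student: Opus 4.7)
The plan is to locate a non-exact $C^*$-subalgebra of $C^*_{u,m}(X,d_0)$ and then invoke two standard facts: nuclearity implies exactness, and exactness passes to $C^*$-subalgebras. Together these will force $C^*_{u,m}(X,d_0)$ to be non-nuclear.

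The failure of bounded geometry of $(X,d_0)$ provides $R_0>0$ with $\sup_{x\in X}|B_{d_0}(x,R_0)|=\infty$. Fix any family $(G_k)_{k\in\mathbb N}$ of expander graphs of uniformly bounded degree with $|V(G_k)|=v_k\to\infty$, and choose $x_k\in X$ with $|B_{d_0}(x_k,R_0)|\geq v_1+\cdots+v_k$; recursively pick pairwise disjoint $A_k\subset B_{d_0}(x_k,R_0)$ with $|A_k|=v_k$, identifying $A_k$ with $V(G_k)$. Define a metric $d$ on $X$ by $d(a,b)=d_{G_k}(a,b)$ whenever $a,b\in A_k$, and $d(x,y)=\infty$ for every other pair of distinct points. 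Condition (D1) is immediate; (D2) follows from $d_{G_k}\geq 1$ together with $d_0|_{A_k}\leq 2R_0$ inside clusters (and is trivial where $d=\infty$); the uniform degree bound on the $G_k$'s yields (D3), since $d$-balls of any fixed radius either lie in a single $A_k$ or are singletons. Thus $d\in\mathcal D(d_0)$, and we obtain an inclusion $C^*_u(X,d)\hookrightarrow C^*_{u,m}(X,d_0)$.

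Because $d=\infty$ between different $A_k$'s and between $\bigsqcup_k A_k$ and its complement, every finite $d$-propagation operator is block diagonal with respect to $X=\bigsqcup_k A_k\sqcup(X\setminus\bigsqcup_k A_k)$, so $C^*_u(X,d)$ splits as a $C^*$-direct sum with $C^*_u(\bigsqcup_k A_k,d|_{\bigsqcup_k A_k})$ as a direct summand. The latter is the uniform Roe algebra of a bounded-geometry space whose property A reduces to uniform property A of the components $(G_k,d_{G_k})$, and the defining feature of an expander family is precisely the failure of this uniform property A. Invoking the equivalence, for bounded geometry spaces, of property A with exactness of the uniform Roe algebra (Skandalis--Tu--Yu; compare Theorem 5.5.7 in \cite{Brown-Ozawa} for the parallel nuclearity statement), this summand is non-exact, hence $C^*_u(X,d)$ is non-exact, and finally $C^*_{u,m}(X,d_0)$ is non-exact and thus non-nuclear.

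The only delicate point is that $d$ takes the value $\infty$ between different components, which is slightly non-standard for results about property A and exactness; however, in this degenerate situation both bounded geometry and property A reduce to uniform versions of the analogous properties for the expander components, so the expander hypothesis already supplies the required failure without any extra work.
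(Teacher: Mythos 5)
Your construction of the clusters $A_k$ and of the auxiliary metric $d$ is sound, and it parallels the paper's strategy: the paper's own proof also extracts disjoint finite sets of bounded diameter and unbounded cardinality, builds a bounded geometry metric situation inside $\mathcal D$, and then exhibits a non-exact $C^*$-subalgebra of $C^*_{u,m}(X,d_0)$, concluding via ``nuclear $\Rightarrow$ exact $\Rightarrow$ exact subalgebras.'' The checks of (D1)--(D3) for your $d$ are correct, as is the block-diagonal splitting of $C^*_u(X,d)$.

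The gap is in the last step, where you declare the expander summand non-exact. Since nuclearity does not pass to $C^*$-subalgebras, your argument genuinely needs \emph{non-exactness} of $C^*_u(\bigsqcup_k A_k,d)$, and the results you cite do not deliver it: Skandalis--Tu--Yu and Theorem 5.5.7 of \cite{Brown-Ozawa} give the equivalence of property A with \emph{nuclearity} of the uniform Roe algebra, so the failure of (uniform) property A for an expander family only yields that the summand is non-nuclear --- which proves nothing about the ambient algebra. The implication ``$C^*_u(X)$ exact $\Rightarrow$ $X$ has property A'' for bounded geometry spaces is a genuinely harder and later theorem (due to Sako), not contained in the sources you name; moreover, applying it to a space in which distinct components are at distance $\infty$ requires at least a remark (e.g.\ replacing the infinite distances by rapidly growing finite ones changes $C^*_u$ only by compacts, and exactness is stable under such extensions). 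If you are willing to cite Sako's theorem, your proof closes; otherwise it does not. The paper sidesteps this entirely by choosing the ``expanders'' to be quotients $\Gamma/\Gamma_n$ of a residually finite property (T) group and embedding the quasi-regular representations $\oplus_n\lambda_n$ as permutation matrices supported on the clusters: the resulting subalgebra contains Wassermann's $C^*_{\oplus_n\pi_n}(\Gamma)$, whose non-exactness is proved directly in \cite{Wassermann}, so only the elementary facts about exactness are needed. That is the concrete non-exact subalgebra your argument is missing.
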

\begin{proof}
First, we need the following statement:

\begin{lem}
Let $X$ be not of bounded geometry. Then there exists $R>0$ and a sequence $(X_n)_{n\in\mathbb N}$ of finite subspaces such that
\begin{itemize}
\item[(x1)]
$X_n\cap X_m=\emptyset$ when $n\neq m$;
\item[(x2)]
$\operatorname{diam}X_n\leq R$;
\item[(x3)]
$\lim_{n\to\infty}|X_n|=\infty$.
\end{itemize}

\end{lem}
\begin{proof}
Consider the two possible cases:

1) There exists $R>0$ and $x\in X$ such that $B_d(x,R)$ is infinite. Then it is trivial to find the sets $X_n$, $n\in\mathbb N$, inside $B_d(x,R)$.

2) For any $R>0$, all balls $B_d(x,R)$ are finite. As $X$ does not have bounded geometry, there exists $R>0$ and a sequence $\{x_n\}_{n\in\mathbb N}$ of points such that $\lim_{n\to\infty}|B_d(x_n,R)|=\infty$. Let us pass to a subsequence. Set $n_1=1$. As $B_d(x_1,2R)$ contains only a finite number of points, there exists $n_2$ such that $|B_d(x_{n_2},R)|>2$ and $x_{n_2}\notin B_d(x_{n_1},2R)$. The latter means that $B_d(x_{n_1},R)\cap B_d(x_{n_2},R)=\emptyset$. Inductively, after we choose $n_1,\ldots, n_k$ such that $B_d(x_{n_j},R)\cap B_d(x_{n_j},R)=\emptyset$ for $i\neq j$ and $|B_d(x_{n_i},R)|>i$, we can find $n_{k+1}$ such that
$x_{n_{k+1}}\in X\setminus(\sqcup_{i=1}^k B_d(x_{n_i},2R))$ and $|B_d(x_{n_{k+1}},R)|>k+1$.

\end{proof}

To proceed with the proof of Proposition \ref{N}, let $\Gamma$ be a countable residually finite property (T) group, $\Gamma_1\supset\Gamma_2\supset\cdots$ a sequence of finite index normal subgroups with trivial intersection. Let $\{\pi_n\}_{n\in\mathbb N}$ be a set of mutually non-equivalent irreducible finitedimensional representations of $\Gamma$ such that $\pi_n$ factorises through $\Gamma/\Gamma_n$ for each $n\in\mathbb N$. Let also $\lambda_n$ denote the quasiregular representation of $\Gamma$, i.e. the composition of the regular representation of $\Gamma/\Gamma_n$ with the quotient map $\Gamma\to\Gamma/\Gamma_n$.

Let $C^*_{\oplus_{n\in\mathbb N}\pi_n}(\Gamma)$ (resp. $C^*_{\oplus_{n\in\mathbb N}\lambda_n}(\Gamma)$) denote the $C^*$-algebra generated by all $\oplus_{n\in\mathbb N}\pi_n(g)$ (resp. $\oplus_{n\in\mathbb N}\lambda_n(g)$), $g\in\Gamma$. As $\lambda_n$ contains $\pi_n$ for any $n\in\mathbb N$, $C^*_{\oplus_{n\in\mathbb N}\pi_n}(\Gamma)\subset C^*_{\oplus_{n\in\mathbb N}\lambda_n}(\Gamma)$.

It is known \cite{Wassermann} that $C^*_{\oplus_{n\in\mathbb N}\pi_n}(\Gamma)$ is not exact. So, if we show that $C^*_{\oplus_{n\in\mathbb N}\lambda_n}(\Gamma)$ is a $C^*$-subalgebra in $C^*_{u,m}(X,d_0)$ then the latter cannot be nuclear.

Let $|\Gamma/\Gamma_n|=m_n$. Passing to a subsequence, we may assume that there exists $R>0$ and a sequence $\{X_n\}_{n\in\mathbb N}$ of subsets in $X$ such that $X_n\cap X_m=\emptyset$ when $n\neq m$ and $|X_n|=m_n$. Then fix, for each $n\in\mathbb N$, a bijection $\varphi_n:\Gamma/\Gamma_n\to X_n$.

Let $p_n:l^2(X)\to l^2(X_n)$ and $i_n:l^2(X_n)\to l^2(X)$ be canonical projection and canonical inclusion induced by the inclusion $X_n\subset X$.
For $\xi\in l^2(X)$ and $g\in\Gamma$, set $\varphi(g)\xi=\oplus_{n\in\mathbb N}i_n\lambda_n(g) p_n\xi$. The map $\varphi$ extends to an inclusion $\varphi:C^*_{\oplus_{n\in\mathbb N}\lambda_n}(\Gamma)\to \mathbb B(l^2(X))$. By (x2), the propagation of $\varphi(g)$ does not exceed $R$, hence $\varphi(g)\in C^*_u(X,d)$ for any $g\in\Gamma$. As $\varphi(g)$ acts on each $X_n$ by permutations, its matrix has exactly one non-zero entry in each line and in each column. As linear combinations of group elements is dense in $C^*_{\oplus_{n\in\mathbb N}\lambda_n}(\Gamma)$, the image of $\varphi$ lies in $C^*_{u,m}(X,d_0)$.

\end{proof}

\section{HR condition and the maximal uniform Roe algebras}

Let us consider the $*$-algebras $\mathbb C_{u,p}[X,d]$ and $\mathbb C_{u,m}[X,d]$ before completing them with respect to the norm in $l^2(X)$. Following \cite{Spakula-Willett}, we can define the maximal $C^*$-algebras $C^*_{u,p;max}(X)$ and $C^*_{u,m;max}(X)$ by completing these $*$-algebras with respect to the supremum of norms $\|\pi(\cdot)\|$ over all $*$-representations, if the latter is finite.

\begin{lem}
Let $T\in \mathbb C_{u,p}[X,d]$ or $T\in \mathbb C_{u,m}[X,d]$. Then $\sup\|\pi(T)\|<\infty$.

\end{lem}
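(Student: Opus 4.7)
The statement reduces by the preceding proposition to the following concrete setup: $T$ is a bounded operator on $l^2(X)$ lying in $\mathbb B^{(k)}(l^2(X)) \cap C^S(X, d_0)$ for some $k \in \mathbb N$ and $S > 0$, and (in the $p$-case) $T$ is supported on $Y \times Y$ for a bounded-geometry subspace $Y \subset X$. My plan is to adapt the partial-translation decomposition of \cite{Spakula-Willett} and then estimate each piece in an arbitrary $*$-representation by transporting positivity relations that already hold inside the given $*$-algebra.

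First, I view $E := \{(x,y) : T_{xy} \neq 0\}$ as the edge set of a bipartite graph whose two vertex classes are copies of $X$. The row/column bound says this graph has maximum degree at most $k$, so by K\"{o}nig's edge-colouring theorem --- extended to the infinite bipartite case by a De Bruijn--Erd\H{o}s compactness argument --- $E$ partitions into $k$ matchings $M_1, \ldots, M_k$. Each $M_j$ is the graph of a partial bijection $\sigma_j$ of $X$, and yields a decomposition $T = \sum_{j=1}^{k} V_j D_j$, in which $V_j$ is the $0$-$1$ partial isometry $\delta_y \mapsto \delta_{\sigma_j(y)}$ and $D_j$ is the diagonal operator with $(D_j)_{yy} = T_{\sigma_j(y),\, y}$ for $y \in \mathrm{dom}(\sigma_j)$ and zero otherwise. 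Both $V_j$ and $D_j$ inherit the bounds propagation $\le S$ and at most one non-zero entry per row and column, so they lie in the same $*$-algebra as $T$; in the $p$-case they in fact belong to $\mathbb C_u[Y]$.

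Now fix any $*$-representation $\pi$ of this algebra on a Hilbert space $H$, and estimate each factor. The product $V_j^* V_j$ is the diagonal projection onto $\mathrm{dom}(\sigma_j)$; being a self-adjoint idempotent in the algebra, $\pi(V_j^* V_j) = \pi(V_j)^* \pi(V_j)$ is a projection in $\mathbb B(H)$, so $\|\pi(V_j)\| \le 1$. For $D_j$, set $M_j := \|D_j\|_\infty \le \|T\|$ and let $P$ denote either $\mathbf 1_{l^2(X)} \in \mathbb C_{u,m}[X,d_0]$ (in the $m$-case) or the projection $P_Y \in \mathbb C_u[Y]$ onto $l^2(Y)$ (in the $p$-case, viewed as the unit of $\mathbb C_u[Y]$). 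Define a bounded diagonal operator $E_j$ by $(E_j)_{yy} = \sqrt{M_j^2 - |(D_j)_{yy}|^2}$ on $\supp P$ and $0$ elsewhere; it has propagation $0$ and at most one non-zero entry per row and column, hence lies in the algebra. The identity $M_j^2 P - D_j^* D_j = E_j^* E_j$ then holds in the $*$-algebra, so applying $\pi$ yields $\pi(D_j)^* \pi(D_j) \le M_j^2 \pi(P)$, and since $\pi(P)$ is a projection this gives $\|\pi(D_j)\| \le M_j \le \|T\|$.

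Combining the estimates,
\[
\|\pi(T)\| \le \sum_{j=1}^{k} \|\pi(V_j)\|\,\|\pi(D_j)\| \le k\,\|T\|,
\]
independently of $\pi$, which proves the lemma. The only point to check with care is that the auxiliary elements $V_j$, $D_j$, $E_j$ genuinely sit inside the given $*$-algebra, so that $\pi$ may be applied and the positivity relations are preserved; this is exactly what the characterisation $\mathbb C_{u,m}[X, d_0] = \bigcup_{k, S} \mathbb B^{(k)}(l^2(X)) \cap C^S(X, d_0)$ from the previous proposition is designed to ensure, and in the $p$-case it follows by tracking supports inside $Y$. The K\"{o}nig-type edge colouring step is purely combinatorial and standard in the infinite bounded-degree setting.
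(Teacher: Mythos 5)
Your proof is correct and follows essentially the same route as the paper: the paper also decomposes $T$ into a bounded number of terms of the form (diagonal multiplier)$\times$(partial translation) and bounds each factor uniformly over all $*$-representations. You simply supply the details the paper leaves implicit --- the K\"onig/De Bruijn--Erd\H{o}s edge-colouring that produces the $k$ matchings, and the positivity identities showing $\|\pi(V_j)\|\leq 1$ and $\|\pi(D_j)\|\leq\|D_j\|_\infty$ --- which is a welcome tightening rather than a different argument.
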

\begin{proof}
If $T\in\mathbb C_{u,p}[X,d]$ then there exists $Y\subset X$ of bounded geometry and $S\in\mathbb N$ such that $T\in C^S(Y)$. Then $T=\sum_{i=1}^ST^{(i)}$, where  each $T^{(i)}$ has the form $T^{(i)}=f_iv_i$, where $f_i\in l^\infty(Y)$, $\|f_i\|_\infty\leq\|T\|$, and $v_i$ are partial isometries.
Similarly, if $T\in\mathbb B^{(S)}(l^2(X))$ then $T$ is of the same form.
Therefore,
$$
\|\pi(T)\|\leq\sum_{i=1}^S\|\pi(f_i)\|\|\pi(v_i)\|\leq S\|T\|.
$$

\end{proof}

There is a canonical surjection $\lambda:C^*_{u,m;max}(X)\to C^*_{u,m}(X)$, and we are interested in conditions providing that $\lambda$ is an isomorphism.

Recall that $(X,d_0)$ is coarsely of bounded geometry \cite{Capraro} if there exists $l>0$ and a subspace $X_l\subset X$ of bounded geometry, which is an $l$-net in $X$.

\begin{lem}
Let $X_l\subset X$ be an $l$-net of bounded geometry, and let $X$ satisfy the HR condition. Then the functions $(\xi_x)_{x\in X}$ in the HR condition can be taken to satisfy additionally $\supp\xi_x\in X_l$.

\end{lem}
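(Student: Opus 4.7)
The plan is to modify the given HR functions by collapsing their mass onto $X_l$ along a nearest-point map. First I fix a map $p\colon X\to X_l$ with $d(x,p(x))\le l$ for every $x\in X$; such a map exists by choice because $X_l$ is an $l$-net. Given $R>0$ and $\varepsilon>0$, let $\xi\colon X\to l^2(X)$ and $S>0$ be chosen by the HR condition for these parameters. For each $x\in X$, I define $\eta_x\in l^2(X)$, supported on $X_l$, by
$$\eta_x(z)=\Bigl(\sum_{y\in p^{-1}(z)}\xi_x(y)^2\Bigr)^{1/2}\quad\text{for }z\in X_l,$$
and $\eta_x(z)=0$ otherwise.

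Conditions (HR1) and (HR3) can be verified directly. Non-negativity of $\eta_x$ is automatic; since the fibres $\{p^{-1}(z)\}_{z\in X_l}$ partition $X$, one has $\|\eta_x\|^2=\sum_{z\in X_l}\sum_{y\in p^{-1}(z)}\xi_x(y)^2=\|\xi_x\|^2=1$, and each coordinate satisfies $\eta_x(z)\le\|\xi_x\|=1$; this gives (HR1). For (HR3), if $\eta_x(z)\ne 0$ then some $y\in p^{-1}(z)$ has $\xi_x(y)\ne 0$, so $d(x,y)\le S$ by the original (HR3) and $d(y,z)=d(y,p(y))\le l$; hence $\supp\eta_x\subset B_d(x,S+l)\cap X_l$, so the new propagation parameter is $S+l$.

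The main step is (HR2). For each fixed $z\in X_l$ I would apply the reverse triangle inequality $|\|u\|-\|v\||\le\|u-v\|$ to the vectors $u=(\xi_x(y))_{y\in p^{-1}(z)}$ and $v=(\xi_{x'}(y))_{y\in p^{-1}(z)}$ in $l^2(p^{-1}(z))$, obtaining $|\eta_x(z)-\eta_{x'}(z)|^2\le\sum_{y\in p^{-1}(z)}(\xi_x(y)-\xi_{x'}(y))^2$. Summing over $z\in X_l$ and using again the partition of $X$ by fibres yields $\|\eta_x-\eta_{x'}\|^2\le\|\xi_x-\xi_{x'}\|^2<\varepsilon^2$ whenever $d(x,x')\le R$, so (HR2) holds with the same $R$ and $\varepsilon$. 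There is no real obstacle; the point worth emphasising is that the pushforward is built from square roots of sums of squares precisely so that this Hilbert-space inequality is available to control (HR2) by the original HR2 estimate.
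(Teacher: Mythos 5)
Your proof is correct, but it takes a genuinely different route from the paper. The paper first invokes the fact that the HR condition is inherited by subspaces (Proposition 3 of Dranishnikov--Januszkiewicz) to get HR functions $\xi\colon X_l\to l^2(X_l)$ on the net itself, and then simply pulls these back along the nearest-point map, setting $\eta_x=\xi_{p(x)}$ (extended by zero off $X_l$); condition (HR2) then follows because $d_0(x,y)<R-2l$ forces $d_0(p(x),p(y))<R$. You instead keep the original HR functions on all of $X$ and push them forward along $p$, defining $\eta_x(z)$ as the $l^2$-norm of $\xi_x$ over the fibre $p^{-1}(z)$; the reverse triangle inequality applied fibrewise then gives $\|\eta_x-\eta_{x'}\|\le\|\xi_x-\xi_{x'}\|$. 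Your argument is self-contained (no appeal to heredity of the HR condition) and gives slightly cleaner constants: (HR2) holds with the same $R$ and $\varepsilon$ as the original data, whereas the paper's construction loses $2l$ in the $R$-parameter; both constructions enlarge $S$ to $S+l$. The one point worth stating explicitly in your write-up is that the fibres $p^{-1}(z)$ may be infinite, so the fibrewise sums are infinite series, but they converge because they are dominated by $\|\xi_x\|^2=1$, and the reverse triangle inequality is being applied in the Hilbert space $l^2(p^{-1}(z))$, which is legitimate.
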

\begin{proof}
By Proposition 3 of \cite{Dranishnikov}, the HR condition passes to subspaces, so for any $\varepsilon>0$ and any $R>2l$ there exists $S>0$ and $\xi:X_l\to l^2(X_l)$ with the aforementioned properties. Fix, for any $x\in X$, a point $p(x)\in X_l$ such that $d_0(x,p(x))\leq l$. This gives a coarse equivalence map $p:X\to X_l$. Set
$$
\eta_x(y)=\left\lbrace\begin{array}{cl}\xi_{p(x)}(y), &\mbox{if\ }y\in X_l;\\0,&\mbox{otherwise.}\end{array}\right.
$$
Then $\supp\eta_x\in X_l$ for any $x\in X$, and (HR1) obviously holds true. If $d_0(x,y)<R-2l$ then $d_0(p(x),p(y))<R$, hence
$$
\|\eta_x-\eta_y\|_{l^2(X)}=\|\eta_x|_{X_l}-\eta_y|_{X_l}\|_{l^2(X_l)}=\|\xi_{p(x)}-\xi_{p(y)}\|_{l^2(X_l)}<\varepsilon,
$$
which implies (HR2). Finally, since $\supp\eta_x=\supp\xi_{p(x)}\subset B_{d_0}(x,S+l)$,  (HR3) holds.

\end{proof}

\begin{thm}\label{lambda-iso}
Let $(X,d_0)$ be uniformly discrete, coarsely of bounded geometry and satisfy the HR condition. Then $C^*_{u,m;max}(X)=C^*_{u,m}(X)$.

\end{thm}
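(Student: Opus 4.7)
The plan is to adapt the argument of Spakula and Willett \cite{Spakula-Willett} from the bounded-geometry case to the metric-related setting, building on the structural description of $\mathbb C_{u,m}[X,d_0]$ given by the Proposition in Section~2 and on the refined HR functions produced by the preceding Lemma. Since the canonical surjection $\lambda\colon C^*_{u,m;max}(X) \to C^*_{u,m}(X)$ already exists, it suffices to verify that
\[
\|\pi(T)\|_{\mathbb B(H)} \le \|T\|_{\mathbb B(l^2(X))}
\]
for every $*$-representation $\pi\colon \mathbb C_{u,m}[X,d_0] \to \mathbb B(H)$ and every $T$ in the dense $*$-subalgebra $\mathbb C_{u,m}[X,d_0]$.

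First I fix such a $T$ and invoke the Proposition in Section~2 to write $T \in \mathbb B^{(k)}(l^2(X)) \cap C^S(X,d_0)$ for some $k,S>0$. Observe that the rank-one projections $e_x$ and the matrix units $E_{xy}$ with $d_0(x,y)\le S$ all lie in $\mathbb B^{(1)}(l^2(X))\cap C^S(X,d_0)\subset \mathbb C_{u,m}[X,d_0]$, so $\pi(e_x)$ and $\pi(E_{xy})$ are available; the family $\{\pi(e_x)\}_{x\in X}$ consists of mutually orthogonal projections summing strongly to $1_H$ (after reducing to the essential part of $\pi$). Fix also a bounded-geometry $l$-net $X_l\subset X$ provided by coarse bounded geometry; then, for any $\varepsilon>0$ and any $R>S$, the preceding Lemma supplies a family $\xi\colon X\to l^2(X)$ satisfying (HR1)--(HR3) with $\supp \xi_x\subseteq X_l$ for every $x$.

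Following Spakula--Willett, I will use $\xi$ together with the $\pi(e_x)$ and $\pi(E_{xy})$ to build an isometry $V$ from $H$ into $l^2(X)\otimes H$ (or a suitable tensor summand constructed from $\xi$) and to associate the UCP map $\Phi_\xi(A):=V^*(A\otimes 1)V\colon \mathbb B(l^2(X))\to \mathbb B(H)$. The target estimate is
\[
\|\Phi_\xi(T) - \pi(T)\|_{\mathbb B(H)} \;\le\; C(k,X_l)\,\varepsilon\,\|T\|,
\]
with $C(k,X_l)$ depending only on the sparsity constant $k$ and on the bounded-geometry constants of $X_l$. Since $\Phi_\xi$ is UCP, $\|\Phi_\xi(T)\|\le\|T\|$, and combining this with the displayed bound and letting $\varepsilon\to 0$ yields the required inequality $\|\pi(T)\|\le\|T\|$.

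The hard part will be securing this displayed estimate with an $X$-uniform constant. In the Spakula--Willett setting, bounded geometry of $X$ provides uniform bounds on ball cardinalities that dominate the various sums arising in $\Phi_\xi(T)-\pi(T)$. Here $X$ itself has no bounded geometry, so those uniform bounds must be replaced by two separate inputs working in tandem: the $\mathbb B^{(k)}$ condition on $T$, which caps the number of non-zero matrix entries in each row and column, and the bounded geometry of the net $X_l$, which caps the number of supports $\supp \xi_x$ that can contain any given $y\in X_l$. The main task is to thread these two sources of uniformity through the matrix computation while tracking how the (HR2) bound $\|\xi_x-\xi_y\|<\varepsilon$ propagates; once this is arranged, the remaining bookkeeping follows the bounded-geometry template of \cite{Spakula-Willett}.
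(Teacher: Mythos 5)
There is a genuine gap, and it sits exactly where you locate ``the hard part.'' Your construction hinges on the family $\{\pi(e_x)\}_{x\in X}$ being mutually orthogonal projections summing strongly to $1_H$ on the essential subspace, and on an isometry $V$ built from them so that $\Phi_\xi=V^*(\cdot\otimes 1)V$ satisfies $\|\Phi_\xi(T)-\pi(T)\|\le C(k,X_l)\,\varepsilon\,\|T\|$. That premise is false for general representations of $\mathbb C_{u,m}[X,d_0]$: the closed ideal of $C^*_{u,m;max}(X)$ generated by the matrix units $E_{xy}$ is proper when $X$ is infinite (the identity, a diagonal operator lying in $\mathbb B^{(1)}(l^2(X))\cap C^0(X,d_0)$, is not in it, since the maximal norm dominates the operator norm), so there are nonzero representations $\pi$ with $\pi(e_x)=\pi(E_{xy})=0$ for all $x,y$ while $\pi(1)=1$ and the essential subspace is all of $H$. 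For such $\pi$, anything you assemble from the $\pi(e_x)$ vanishes, $V^*V=\sum_x\|\xi_x\|^2\pi(e_x)=0$ rather than $1$, and the target estimate fails outright (take $T=1$). So no amount of bookkeeping with the $\mathbb B^{(k)}$ condition and the bounded geometry of $X_l$ will secure the displayed bound; the construction itself must change.

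The paper's route avoids the minimal projections entirely. From the HR functions supported in $X_l$ it forms the kernel $k(x,y)=\langle\xi_x,\xi_y\rangle$ and the Schur multiplier $M_k(T)=k\ast T$, and then invokes Lemma 2.2 of \cite{Spakula-Willett} (applied to the bounded-geometry net $X_l$) to write $M_k(T)=\sum_{i=1}^N\varphi_i T\varphi_i$ with $\varphi_i\colon X\to[0,1]$ \emph{diagonal} multiplication operators; these lie in $\mathbb B^{(1)}(l^2(X))\cap C^0(X,d_0)$ and survive in every representation, and the identity is algebraic, so $M_k$ is completely positive and uniformly bounded in \emph{every} $C^*$-completion. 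Since $M_k$ preserves propagation (and sparsity), $M^{max}_{k}(T)$ lands back in $\operatorname{dirlim}_{\mathcal D}\mathbb C_u[X,d]$, where $\lambda$ is injective, and choosing $k_n$ with $M_{k_n}\to\operatorname{id}$ in point-norm on both completions forces $\Ker\lambda=0$. Your reduction to the inequality $\|\pi(T)\|\le\|T\|_{\mathbb B(l^2(X))}$ is correct, and your two sources of uniformity ($T\in\mathbb B^{(k)}$ plus bounded geometry of $X_l$) are the right ones; but to make the argument work you should replace the $\pi(e_x)$-based isometry by the decomposition through the $\pi(\varphi_i)$, i.e.\ essentially carry out the Schur-multiplier argument.
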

\begin{proof}
We follow the proof of Proposition 1.3 of \cite{Spakula-Willett} with only a few changes caused by unbounded geometry.
Let $\mathbb C_u[X,d_0]\subset A$ for some $C^*$-algebra $A$. We are going to construct a family of maps $M_k:\mathbb C_u[X,d_0]\to \mathbb C_u[X,d_0]$, which are completely positive with respect to the order structure in $A$.

Given $\varepsilon>0$ and $R>0$, let $(\xi_x)_{x\in X}$ be the set of functions provided by the HR condition and coarsely bounded geometry. Then we can assume that $\supp\xi_x\in X_l$ for any $x\in X$. Set $k(x,y)=\langle\xi_x,\xi_y\rangle$, $M_k(T)=k\ast T$, where $\ast$ denotes the Schur product, i.e. element-wise matrix product defined by $[M_k(T)]_{xy}=k(x,y)T_{xy}$, $x,y\in X$. Note that if $T\in C^S(X,d)$ then $M_k(T)\in C^S(X,d)$.

By Lemma 2.2 of \cite{Spakula-Willett}, for any $T\in\mathbb C_u[X,d]$, bounded geometry of $X_l$ implies that there exists $N\in\mathbb N$ and functions $\varphi_i:X\to[0,1]$, $i=1,2,\ldots,N$, such that $M_k(T)=\sum_{i=1}^N\varphi_iT\varphi_i$. This shows complete positivity of $M_k$ in any $C^*$-completion of $\operatorname{dirlim}_\mathcal D\mathbb C_u[X,d]$. In particular, this implies that $M_k$ is bounded. By continuity, the map $M_k$ can be extended to $*$-homomorphisms
$$
M^{max}_k:C^*_{u,m;max}(X,d_0)\to C^*_{u,m;max}(X,d_0)
$$
and
$$
M^{\lambda}_k:C^*_{u,m}(X,d_0)\to C^*_{u,m}(X,d_0).
$$
It is clear that $C^S(X,d)$ is closed in $C^*_{u,m;max}(X,d_0)$ for any $S>0$ and any $d\in\mathcal D$ (cf. Lemma 2.4 of \cite{Spakula-Willett}). By varying $\varepsilon$ and $R$, we can find a sequence $k_n$ such that $M^{max}_{k_n}$ and $M^\lambda_{k_n}$ converge with respect to the point-norm topology to the identities on $C^*_{u,m;max}(X,d_0)$ and on $C^*_{u,m}(X,d_0)$ respectively. If $T\in\Ker\lambda$ then $\lambda(M^{max}_{k_n}(T))=M^\lambda_{k_n}(\lambda(T))=0$ for any $n\in\mathbb N$. But $M^{max}_{k_n}(T)$ lies in $\operatorname{dirlim}_{\mathcal D}\mathbb C[X,d]$, on which $\lambda$ is injective, so $M^{max}_{k_n}(T)=0$, and hence $T=0$ as their limit.

\end{proof}

Remark that a similar, but simpler, argument works for $C^*_{u,p,max}$ in the second case:
\begin{thm}
If $X$ is uniformly discrete and coarsely of bounded geometry then $\lambda:C^*_{u,p;max}\to C^*_{u,p}$ is an isomorphism.

\end{thm}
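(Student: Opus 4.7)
The plan is to mirror the proof of Theorem~\ref{lambda-iso}, exploiting the fact that in the point-related setting the underlying directed system ranges over subspaces of bounded geometry rather than over metrics. Each $T\in\mathbb{C}_{u,p}[X,d_0]$ already sits in $\mathbb{C}_u[Y,d_0|_Y]$ for some bounded geometry $Y\subset X$, so we work inside a single bounded geometry space and can invoke the Spakula--Willett machinery directly, rather than navigating the directed set $\mathcal D$ of metrics.

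First I would use coarse bounded geometry to fix a bounded geometry $l$-net $X_l\subset X$. For each bounded geometry $Y\subset X$ in the direct system, the nearest-point map $p_Y\colon Y\to X_l$ has uniformly finite fibres (by bounded geometry of $Y$), so $Y$ is coarsely equivalent to $p_Y(Y)\subset X_l$. This allows HR-type cutoff functions built on $X_l$ to be transported to each $Y$ compatibly across the direct system.

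Next, for each $Y$, I would build Schur multipliers $M_{k_n}^Y\colon\mathbb{C}_u[Y]\to\mathbb{C}_u[Y]$ from those cutoffs as in the proof of Theorem~\ref{lambda-iso}; Lemma~2.2 of \cite{Spakula-Willett} applied inside the bounded geometry space $Y$ writes $M_{k_n}^Y(T)=\sum_i\varphi_i T\varphi_i$, giving complete positivity for every $C^*$-completion. Because each $M_{k_n}^Y$ is an entrywise Schur multiplier, the family is compatible with the inclusions $\mathbb{C}_u[Y]\subset\mathbb{C}_u[Y']$ for $Y\subset Y'$, so the $M_{k_n}^Y$ assemble into $*$-homomorphisms $M_{k_n}^{max}$ on $C^*_{u,p;max}(X)$ and $M_{k_n}^\lambda$ on $C^*_{u,p}(X)$ converging in point-norm to the respective identities.

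The conclusion then follows by the kernel-vanishing argument at the end of the proof of Theorem~\ref{lambda-iso}: for $T\in\Ker\lambda$ each $M_{k_n}^{max}(T)$ lies in the algebraic direct limit $\mathbb{C}_{u,p}[X,d_0]$, on which $\lambda$ is injective, forcing $M_{k_n}^{max}(T)=0$ and hence $T=0$ in the limit. The main difficulty I anticipate is producing cutoff functions satisfying (HR1)--(HR3) on each bounded geometry $Y$ without a standalone HR hypothesis on $X$: the plan is to transport them from $X_l$ along the coarse equivalence with $p_Y(Y)$, but one must carefully verify compatibility across the direct system so that the Schur multipliers $M_{k_n}^Y$ assemble coherently into $M_{k_n}^{max}$ and $M_{k_n}^\lambda$.
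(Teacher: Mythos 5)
The paper gives no actual proof of this theorem --- only the remark that ``a similar, but simpler, argument works'' --- so the comparison can only be with what such an argument would have to be. Your proposal has a genuine gap at its first step. The Schur-multiplier machinery of Theorem \ref{lambda-iso} requires a kernel $k(x,y)=\langle\xi_x,\xi_y\rangle$ built from functions satisfying (HR1)--(HR3), and the theorem as stated does not assume the HR condition. Your proposed remedy --- transporting cutoffs from the bounded geometry net $X_l$ --- does not close the gap, because bounded geometry of $X_l$ does not produce any such functions on $X_l$ in the first place: the existence of $(\xi_x)$ satisfying (HR1)--(HR3) \emph{is} the HR condition, which for bounded geometry spaces is equivalent to property A and fails, e.g., for expanders. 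There is nothing to transport. (By contrast, the compatibility issue you single out as the main difficulty is automatic: a Schur multiplier given by a kernel on $X\times X$ commutes with the inclusions $\mathbb C_u[Y]\subset\mathbb C_u[Y']$ because it acts entrywise.) Note also that, read literally, the theorem would assert $C^*_{u,max}(X)=C^*_u(X)$ for \emph{every} bounded geometry space $X$ (take $Y=X$ in the direct system and $X_l=X$), which goes far beyond the property A result of \cite{Spakula-Willett} and is not expected to hold. The HR hypothesis is surely intended here and must be added.

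With the HR condition restored, your argument can be made to work, but the ``simpler argument'' the authors allude to presumably avoids Schur multipliers on the direct limit altogether: each bounded geometry subspace $Y\subset X$ inherits the HR condition (Proposition 3 of \cite{Dranishnikov}, already used in the paper), hence has property A, hence $C^*_{u,max}(Y)=C^*_u(Y)$ by Proposition 1.3 of \cite{Spakula-Willett}. For $T\in\mathbb C_u[Y]$, the maximal norm computed in $\mathbb C_{u,p}[X,d_0]$ is at least the reduced norm (the regular representation on $l^2(X)$ is one representation of the direct limit) and at most the intrinsic maximal norm of $\mathbb C_u[Y]$ (every representation of the direct limit restricts to $\mathbb C_u[Y]$); since these two bounds coincide, all the norms agree on $\cup_Y\mathbb C_u[Y]$ and the two completions are equal. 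This route uses neither Schur multipliers nor, in fact, the coarse bounded geometry hypothesis.
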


Here are two examples, when Theorem \ref{lambda-iso} is applicable.
\begin{example}
Let $X$ be countable, and $d_0(x,y)=1$ when $x\neq y$. Then $X$ is coarsely equivalent to a point. In this case, $C^*_{u,m}(X)$ is the $C^*$-algebra $\mathbb B_f(H)$ from \cite{Manuilov1}.

\end{example}

\begin{example}
Let $X=\sqcup_{n\in\mathbb N}X_n$, where $X_n$ consists of $n$ points at distance 1 between any two of them, and let $d_0(x,y)=\infty$ if $x\in X_n$, $y\in X_m$, $n\neq m$. Then $X$ is coarsely equivalent to $\mathbb N$ with infinite distance between points. In this case, $C^*_{u,m}(X)=\mathbb B_f(H)\cap\prod_{n\in\mathbb N}M_n$ is a $C^*$-subalgebra in the product of matrix algebras.

\end{example}


\begin{thebibliography}{99}
\bibitem{BNW}
J. Brodzki, G. A. Niblo, N. J. Wright. {\it Property A, partial translation structures and uniform embeddings in groups.} J. London Math. Soc. {\bf 76} (2007), 479--497.

\bibitem{Brown-Ozawa}
N. Brown, N. Ozawa. {\it $C^*$-Algebras and Finite-Dimensional Approximations.} Graduate Studies in Math. {\bf 88}. Amer. Math. Soc., 2008.

\bibitem{Capraro}
V. Capraro. {\it Amenability, locally finite spaces, and bi-Lipschitz embeddings.} Expo. Math. {\bf 31} (2013), 334--349.

\bibitem{Dranishnikov}
A. Dranishnikov, T. Januszkiewicz. {\it Every Coxeter group acts amenably on a compact space.} Topology Proc. {\bf 24} (1999), 135--141.


\bibitem{Higson-Roe}
N. Higson, J. Roe. {\it Amenable group actions and the Novikov conjecture.} J. reine angew. Math. {\bf 519} (2000), 143--153.


\bibitem{Manuilov1}
V. Manuilov. {\it On the $C^*$-algebra of matrix-finite bounded operators.} Preprint, arXiv:1807.11020.


\bibitem{Spakula-Willett}
J. Spakula, R. Willett. {\it Maximal and reduced Roe algebras of coarsely embeddable spaces.} J. reine angew. Math. {\bf 678} (2013), 35--68.

\bibitem{Takesaki}
M. Takesaki. {\it On the cross-norm of the direct product of $C^*$-algebras.} Tohoku Math. J. {\bf 16} (1964), 111--122.

\bibitem{Wassermann}
S. Wassermann. {\it $C^*$-algebras associated with groups with Kazhdan's property T.} Ann. Math. {\bf 134} (1991), 423--431.

\bibitem{Willett}
R. Willett. {\it Some notes on Property A.} Limits of Graphs in Group Theory and Computer Science. EPFL Press, Lausanne, 2009,
191--281.


\end{thebibliography}
\end{document}